\newtheorem{thm}{Theorem}[section]
\newtheorem{lem}[thm]{Lemma}
\theoremstyle{definition}
\newtheorem{remk}{Remark}[section]
\newcommand{\N}{\mathbb N}
\newcommand{\R}{\mathbb R}
\numberwithin{equation}{section}
\begin{document}

% author information

       % first author

       \author{Seonghak Kim}
       \address{Institute for Mathematical Sciences\\ Renmin University of China \\  Beijing 100872, PRC}
       \email{kimseo14@gmail.com}

       % second author

       % current address, usually not needed because it is the same as the
       % regular address

       % title

      % \title[On one-dimensional  forward-backward parabolic equations]{On  one-dimensional forward-backward parabolic  equations  with   linear convection and reaction}
\title[Rate of convergence for 1-D quasilinear problem]{Rate of convergence for one-dimensional quasilinear parabolic problem and its applications}

\subjclass[2010]{Primary 35K59, 35B35. Secondary 35K20, 35B50, 35B51}
\keywords{Quasilinear parabolic equations, exponential rate of convergence, maximum principle, comparison principle, aggregation in population dynamics, Perona-Malik model}

\begin{abstract}
Based on a comparison principle, we derive an exponential rate of convergence for solutions to the initial-boundary value problem for a class of quasilinear parabolic equations in one space dimension. We then apply the result to some models in population dynamics and image processing.
\end{abstract}
\maketitle

\section{Introduction}
In this paper, we study large time behaviors of solutions to the initial-boundary value problem for a class of quasilinear advection-diffusion equations in one space dimension:
\begin{equation}\label{main-ibP}
\left\{
\begin{array}{ll}
  \rho_t=(\sigma(\rho))_{xx} & \mbox{in $\Omega\times(0,\infty)$,} \\
  \rho=\rho_0 & \mbox{on $\Omega\times\{t=0\}$,} \\
  \rho(0,t)=\rho(L,t)=0 & \mbox{for $t\in(0,\infty)$}.
\end{array}\right.
\end{equation}
Here, $\Omega=(0,L)\subset\R$ is the spatial domain of a given length $L>0$, $\sigma=\sigma(s)\in C^2(\R)$ is the flux function, $\rho_0=\rho_0(x)$ is a given initial datum, and $\rho=\rho(x,t)$ is a solution to the problem.

After the pioneering work of {Zelenjak} \cite{Ze} and {Matano} \cite{Ma}, there have been many studies on the analysis of  $\omega$-limit sets and large time behaviors of solutions to some semilinear or quasilinear parabolic problems in one space dimension; see, e.g., \cite{CM,VZL,GS}. In particular, the work \cite{VZL} considered a general quasilinear problem, which includes problem (\ref{main-ibP}) as a special case, and proved the convergence of a global classical solution to a unique steady state in the space $C^1(\bar\Omega)$ as $t\to\infty$ if the solution itself is assumed to be bounded in $C^1(\bar\Omega)$ uniformly in $t\ge0$.

In the present article, we show that a global classical solution to problem (\ref{main-ibP}) converges uniformly  to the steady state $0$ as $t\to \infty$ at an exponential rate. The difference of our result from that in \cite{VZL} is in two-fold. First, we do not assume the uniform boundedness of the solution to (\ref{main-ibP}) in  $C^1(\bar\Omega)$. Second, we obtain specific exponential rates of convergence depending only on the initial datum $\rho_0$, the flux function $\sigma$ and the size $L$ of the spatial domain $\Omega$.

The main motivation of our result lies in its application to study large time behaviors of global \emph{weak} solutions to some problems modeling aggregative movement in population dynamics \cite{Tu, AS} and enhancement of a noisy picture in image processing \cite{PM}. However, proving the global existence of such solutions is not at all obvious since those problems are often \emph{forward} and \emph{backward parabolic} so that the standard methods of parabolic equations are not applicable. Actually, such existence results can be obtained through a combination of the result of this paper and the method of \emph{convex integration}. Since this task is rather long and complicated, we postpone it to the author's upcoming works. In this paper, we instead apply the main result to address large time behaviors of \emph{classical} solutions to such problems for suitable initial data.

We now state the main result of the paper as follows.

\begin{thm}\label{thm:uniform-conv}
Assume that $\sigma'>0$ in $\R$. Let $\rho\in C(\bar\Omega\times[0,\infty))\cap C^{2,1}(\Omega\times(0,\infty))$ be a solution to problem (\ref{main-ibP}),
where $\rho_0\in C(\bar\Omega)$ is an initial datum satisfying the compatibility condition $\rho_0(0)=\rho_0(L)=0$. Then for all $t\ge0,$ one has
\begin{equation}\label{thm:uniform-conv-rate}
\begin{split}
\|\rho(\cdot,t)\|_\infty\le & \|\rho_0\|_\infty \frac{\max\Big\{\frac{\|\rho_0\|_\infty \tilde\theta}{(1-\tau)\theta}+1,m \Big\}-e^{-\lambda L}}{\max\Big\{\frac{\|\rho_0\|_\infty \tilde\theta}{(1-\tau)\theta}+1,m \Big\}-1} \\
&\times \exp\bigg(-\frac{\tau\theta\lambda^2e^{-\lambda L}}{\max\Big\{\frac{\|\rho_0\|_\infty \tilde\theta}{(1-\tau)\theta}+1,m \Big\}-e^{-\lambda L}} t\bigg),
\end{split}
\end{equation}
where $\|\cdot\|_\infty:=\|\cdot\|_{L^\infty(\Omega)}$, $0<\tau<1$, $\lambda>0$, $m>1$,
\[
\theta:=\min_{[-\|\rho_0\|_\infty\frac{m}{m-1}, \|\rho_0\|_\infty\frac{m}{m-1}]}\sigma'\;\;\mbox{and}\;\;
\tilde\theta:=\max_{[-\|\rho_0\|_\infty\frac{m}{m-1}, \|\rho_0\|_\infty\frac{m}{m-1}]}|\sigma''|.
\]
\end{thm}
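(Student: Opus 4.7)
My plan is to prove (\ref{thm:uniform-conv-rate}) by constructing an explicit, exponentially decaying super-solution $w$ (and the sub-solution $-w$) for the quasilinear equation $\rho_t=(\sigma(\rho))_{xx}$ and then appealing to the comparison principle, which is applicable because the hypothesis $\sigma'>0$ keeps the equation non-degenerate parabolic on the range of the coefficients. The precise form of the right-hand side of (\ref{thm:uniform-conv-rate}) dictates the ansatz
\[
w(x,t)\;:=\;\|\rho_0\|_\infty\cdot\frac{M-e^{-\lambda x}}{M-1}\cdot e^{-\mu t},\quad M:=\max\bigl\{\tfrac{\|\rho_0\|_\infty\tilde\theta}{(1-\tau)\theta}+1,\,m\bigr\},
\]
with $\mu>0$ to be chosen. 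Parabolic-boundary dominance is immediate: $w(0,t)=\|\rho_0\|_\infty e^{-\mu t}\ge 0$, $w(L,t)\ge 0$, and since $e^{-\lambda x}\le 1$ implies $\tfrac{M-e^{-\lambda x}}{M-1}\ge 1$ on $[0,L]$, also $w(x,0)\ge\|\rho_0\|_\infty\ge\rho_0(x)$; symmetrically $-w$ lies below $\rho$ on the parabolic boundary.

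The main calculation is the differential inequality $w_t-(\sigma(w))_{xx}\ge 0$ on $\Omega\times(0,\infty)$. Expanding $(\sigma(w))_{xx}=\sigma'(w)w_{xx}+\sigma''(w)w_x^2$, inserting the explicit derivatives of $w$, and dividing through by the positive quantity $C\lambda^2 e^{-\lambda x}e^{-\mu t}$ with $C:=\|\rho_0\|_\infty/(M-1)$, the inequality reduces to
\[
\sigma'(w)\;-\;\sigma''(w)\,C\,e^{-\lambda x-\mu t}\;\ge\;\frac{\mu\,(M-e^{-\lambda x})}{\lambda^2\,e^{-\lambda x}}.
\]
Because $M\ge m>1$ and $s\mapsto s/(s-1)$ is decreasing, the range of $w$ is contained in $[0,\,\|\rho_0\|_\infty m/(m-1)]$, so $\sigma'(w)\ge\theta$ and $|\sigma''(w)|\le\tilde\theta$. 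The definition of $M$ is calibrated precisely so that $C\le(1-\tau)\theta/\tilde\theta$, whence the left-hand side is bounded below by $\theta-\tilde\theta C\ge\tau\theta$. The right-hand side is increasing in $x$ and attains its maximum $\mu(M-e^{-\lambda L})/(\lambda^2 e^{-\lambda L})$ at $x=L$, so the inequality holds throughout $\Omega\times(0,\infty)$ exactly when one chooses $\mu=\tau\theta\lambda^2 e^{-\lambda L}/(M-e^{-\lambda L})$, matching the exponential rate appearing in (\ref{thm:uniform-conv-rate}). A verbatim calculation, which goes through because $\theta$ and $\tilde\theta$ are taken over a symmetric interval, shows that $-w$ is a sub-solution.

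The comparison principle applied on each strip $\Omega\times[0,T]$ then gives $-w\le\rho\le w$, so taking the $L^\infty(\Omega)$-norm and using $\max_{x\in[0,L]}w(x,t)=w(L,t)=\|\rho_0\|_\infty\tfrac{M-e^{-\lambda L}}{M-1}e^{-\mu t}$ yields (\ref{thm:uniform-conv-rate}). The principal obstacle is the simultaneous calibration of $M$: it has to be large enough that the sign-indeterminate term $\sigma''(w)w_x^2$ can be absorbed into $\sigma'(w)w_{xx}$ (which forces $C\le(1-\tau)\theta/\tilde\theta$), and yet the resulting amplitude $\|\rho_0\|_\infty M/(M-1)$ must not escape the window $[-\|\rho_0\|_\infty m/(m-1),\,\|\rho_0\|_\infty m/(m-1)]$ on which $\theta,\tilde\theta$ are defined. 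The $\max$ in the definition of $M$ is precisely what reconciles these two constraints, and choosing an exponential (rather than a sine) spatial profile is what prevents the right-hand side of the reduced inequality from degenerating.
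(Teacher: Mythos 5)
Your proposal follows essentially the same route as the paper: the paper's barrier is $\psi(x,t)=A\,\frac{s-e^{-\lambda x}}{s-1}\,e^{-\gamma t}$ with exactly your calibration of $s$ (your $M$) and of the decay rate $\gamma$ (your $\mu$), the same splitting of the reduced inequality into the $\sigma''$-absorption (forcing $C\le(1-\tau)\theta/\tilde\theta$) and the $\tau\theta$-budget for the time derivative, and the same appeal to the comparison principle. The one point you gloss over is strictness: the comparison principle available here (Lemma \ref{lem:comparison}) requires $v>w$ on the parabolic boundary together with a \emph{strict} differential inequality on at least one side, whereas you assert only $w(x,0)\ge\rho_0(x)$ and $w_t-(\sigma(w))_{xx}\ge0$ and then invoke a weak (non-strict) comparison principle that is not established in the paper and is not immediate for quasilinear equations. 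The paper resolves this by replacing $\|\rho_0\|_\infty$ with $\|\rho_0\|_\infty+\epsilon$ and $\max\{\cdot,m\}$ with $\max\{\cdot,m\}+\epsilon$, so that all the relevant inequalities become strict, and then letting $\epsilon\to0^+$ in the final estimate. Alternatively, your own construction already delivers the strict versions with no perturbation: for $x\in(0,L)$ the right-hand side $\frac{\mu}{\lambda^2}(Me^{\lambda x}-1)$ of your reduced inequality is strictly below its boundary value $\tau\theta$ attained at $x=L$, so in fact $w_t-(\sigma(w))_{xx}>0$ throughout $\Omega\times(0,\infty)$, and the parabolic-boundary inequalities are strict whenever $\|\rho_0\|_\infty>0$ (the case $\rho_0\equiv0$ being trivial by Lemma \ref{lem:maximum}). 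Either patch closes the gap and yields (\ref{thm:uniform-conv-rate}).
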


The constants $\tau$, $\lambda$ and $m$ in the theorem can be chosen arbitrarily as described to fix the rate of convergence; once these numbers are fixed, it is clear that the rate of convergence depends only on $\rho_0$, $\sigma$ and $L$.

The rest of the paper is organized as follows. In Section \ref{sec:max-com-principles}, we derive appropriate maximum and comparison principles for qualitative behaviors of solutions to problem (\ref{main-ibP}). Based on such a comparison principle, the proof of the main result, Theorem \ref{thm:uniform-conv}, is provided in Section \ref{sec:proof-main-thm}. As the last part of the paper, in Section \ref{sec:applications}, we apply the main result to the initial-boundary value problems for the heat equation,  two aggregation models in population dynamics and the Perona-Malik model in image processing.

In closing this section, we fix some notation. First, we will keep the notation, used in this section, throughout the paper unless otherwise stated. We use $W^{1,\infty}(\Omega)$ to denote the space of functions $v\in L^\infty(\Omega)$ with weak derivative $v'=v_x\in L^\infty(\Omega)$ and write the norm $\|v\|_{W^{1,\infty}(\Omega)}:=\|v\|_{L^\infty(\Omega)} +\|v_x\|_{L^\infty(\Omega)}$. Let $G\subset\R^2=\R_x\times\R_t$ be an open set. For integers $k,l\ge0$ with $2l\le k,$ we denote by $C^{k,l}(\bar{G})$ [resp. $C^{k,l}(G)$] the space of functions $v\in C(\bar{G})$ [$v\in C(G)$] with $\partial_t^i\partial_x^j v\in C(\bar{G})$ [$\partial_t^i\partial_x^j v\in C(G)$] for all integers $0\le i\le l$ and $0\le j+2i\le k$. For an integer $k\ge0$ and a number $\alpha\in(0,1)$, we write $C^{k+\alpha,\frac{k+\alpha}{2}}(\bar{G}):= H_{k+\alpha}(G)$, where $H_{k+\alpha}(G)$ is the parabolic H\"older space defined in \cite{Ln}.

\section{Maximum and comparison principles}\label{sec:max-com-principles}

In this section, we prepare two useful lemmas dealing with  maximum and comparison principles concerning problem (\ref{main-ibP}). Throughout this section, we assume that
\[
\sigma\in C^2(\R)\;\;\mbox{and}\;\;\sigma'\ge0\;\;\mbox{in $\R$.}
\]

\subsection{Maximum Principle}

The maximum principle below states that any classical solution $\rho$ to problem (\ref{main-ibP}) goes neither above the maximum value nor below the minimum value of the initial datum $\rho_0$ for all times.

\begin{lem}[Maximum Principle]\label{lem:maximum}
Let $\rho\in C(\bar\Omega\times[0,\infty))\cap C^{2,1}(\Omega\times(0,\infty))$ be a solution to problem (\ref{main-ibP}), where $\rho_0\in C(\bar\Omega)$ is an initial datum satisfying the compatibility condition
$
\rho_0(0)=\rho_0(L)=0.
$
Then
\[
\min_{\bar\Omega}\rho_0\le\rho\le\max_{\bar\Omega}\rho_0\;\;\mbox{in $\Omega\times(0,\infty)$}.
\]
\end{lem}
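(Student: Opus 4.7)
My plan is to apply the classical auxiliary-function trick from the parabolic maximum principle to the non-divergence form of the equation. Expanding, we may rewrite (\ref{main-ibP}) at interior points as
\[
\rho_t = \sigma'(\rho)\rho_{xx} + \sigma''(\rho)\rho_x^2,
\]
which is degenerate parabolic under the standing assumption $\sigma'\ge 0$. Note that the compatibility condition forces $M:=\max_{\bar\Omega}\rho_0\ge \rho_0(0)=0$ and $m:=\min_{\bar\Omega}\rho_0\le 0$, a small but useful observation when comparing boundary values.

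For the upper bound, I would fix $T>0$ and $\varepsilon>0$ arbitrary and consider the perturbed function $v(x,t):=\rho(x,t)-\varepsilon t$ on the compact set $\bar\Omega\times[0,T]$. Suppose $v$ attains its maximum at some $(x_0,t_0)\in\Omega\times(0,T]$. Then at $(x_0,t_0)$, standard calculus gives $v_t\ge 0$ (equality if $t_0<T$, otherwise a one-sided derivative), $v_x=0$, and $v_{xx}\le 0$, so that $\rho_t\ge\varepsilon>0$, $\rho_x=0$, and $\rho_{xx}\le 0$. Substituting into the PDE,
\[
\varepsilon \le \rho_t = \sigma'(\rho)\rho_{xx} + \sigma''(\rho)\rho_x^2 = \sigma'(\rho)\rho_{xx}\le 0,
\]
a contradiction. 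Hence the maximum of $v$ must lie on the parabolic boundary, where $v\le \max\{M,-\varepsilon t\}\le M$ since $M\ge 0$. Thus $\rho\le M+\varepsilon T$ on $\bar\Omega\times[0,T]$, and letting $\varepsilon\to 0^+$ and then $T\to\infty$ yields $\rho\le M$ everywhere.

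The lower bound is entirely symmetric: apply the same argument to $w(x,t):=\rho(x,t)+\varepsilon t$, seeking a contradiction at an interior minimum where $w_t\le 0$, $w_x=0$, $w_{xx}\ge 0$ would force $\rho_t\le -\varepsilon<0$ against $\rho_t=\sigma'(\rho)\rho_{xx}\ge 0$. On the parabolic boundary, $w\ge \min\{m,\varepsilon t\}\ge m$ since $m\le 0$, and letting $\varepsilon\to 0^+$ gives $\rho\ge m$.

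The only point that requires mild care is the regularity mismatch: $\rho$ is merely $C(\bar\Omega\times[0,\infty))$ up to the boundary but $C^{2,1}$ only in the open domain $\Omega\times(0,\infty)$. This is why the extremum of the auxiliary function is sought on a compact $\bar\Omega\times[0,T]$ (so that existence of the extremum is guaranteed), while the PDE is invoked only at interior points $(x_0,t_0)\in\Omega\times(0,T]$. I do not anticipate any real obstacle; the $\varepsilon t$-perturbation is precisely what turns the degenerate inequality $\sigma'(\rho)\rho_{xx}\le 0$ into a strict contradiction, which is the only subtle issue arising from the absence of strict parabolicity.
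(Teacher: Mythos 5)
Your proof is correct and follows essentially the same route as the paper's: an interior-extremum argument using $\rho_x=0$ to eliminate the $\sigma''(\rho)\rho_x^2$ term and $\sigma'\ge 0$ to force a sign contradiction, with a perturbation supplying the needed strictness. The only cosmetic difference is that you use the additive perturbation $\rho\mp\varepsilon t$ where the paper uses the multiplicative weight $e^{-t}(\rho-m_0)$; both devices serve the identical purpose.
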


\begin{proof}
For simplicity, let us write $M_0=\max_{\bar\Omega}\rho_0$, $m_0=\min_{\bar\Omega}\rho_0$ and $\Omega_\tau=\Omega\times (0,\tau)$ for each $0<\tau\le\infty$; then $m_0\le 0\le M_0$. We also set $W(x,t)=\rho(x,t)-M_0$ and $w(x,t)=\rho(x,t)-m_0$ for all $(x,t)\in\bar\Omega_\infty.$ Then it is sufficient to show that
\[
W\le 0\;\;\mbox{and}\;\;w\ge 0\;\;\mbox{in $\Omega_\infty$}.
\]

We only prove that $w\ge 0$ in $\Omega_\infty$ as the other inequality $W\le0$ can be verified in the same way. To argue by contradiction, suppose there exists a point $(x_0,t_0)\in\Omega_\infty$ such that
\begin{equation}\label{lem:maximum-2}
w(x_0,t_0)<0.
\end{equation}
Fix any number $T\in(t_0,\infty),$ and define
\begin{equation}\label{lem:maximum-3}
z(x,t)=e^{-t}w(x,t)\;\;\mbox{for all $(x,t)\in\bar\Omega_T$};
\end{equation}
then $z(x,0)=\rho_0(x)-m_0\ge 0$ for all $x\in\bar\Omega$, and $z(0,t)=z(L,t)=-e^{-t}m_0\ge 0$ for all $t\in[0,T]$. This together with (\ref{lem:maximum-2}) implies that
\[
z(x_1,t_1)=\min_{\bar\Omega_T}z\le e^{-t_0}w(x_0,t_0)<0\;\;\mbox{for some $(x_1,t_1)\in\Omega\times(0,T]$}.
\]
From this, we have
\begin{equation}\label{lem:maximum-4}
\begin{split}
z_t(x_1,t_1) & \le 0,\;\; e^{-t_1}\rho_x(x_1,t_1)=z_x(x_1,t_1)=0,\;\;\mbox{and} \\
& e^{-t_1}\rho_{xx}(x_1,t_1)=z_{xx}(x_1,t_1)\ge0.
\end{split}
\end{equation}

From (\ref{lem:maximum-3}) and (\ref{lem:maximum-4}), we get
\[
0\ge z_t(x_1,t_1)=e^{-t_1}(w_t(x_1,t_1)-w(x_1,t_1)),
\]
yielding $w_t(x_1,t_1)\le w(x_1,t_1)=e^{t_1}z(x_1,t_1)<0$. On the other hand, from (\ref{main-ibP}), (\ref{lem:maximum-4}) and the assumption that $\sigma'\ge0$ in $\R$, we have
\[
\begin{split}
w_t(x_1,t_1)=\rho_t(x_1,t_1) & =\sigma'(\rho(x_1,t_1))\rho_{xx} (x_1,t_1)+\sigma''(\rho(x_1,t_1))(\rho_{x} (x_1,t_1))^2 \\
& = \sigma'(\rho(x_1,t_1))\rho_{xx} (x_1,t_1)\ge0.
\end{split}
\]
We thus arrive at a contradiction.
\end{proof}

\subsection{Comparison Principle}

The \emph{quasilinear} comparison principle below will be the pivotal tool for proving the main result of the paper, Theorem \ref{thm:uniform-conv}, in Section \ref{sec:proof-main-thm}.

\begin{lem}[Comparison Principle]\label{lem:comparison}
Assume that $v,w\in C(\bar\Omega\times[0,\infty))\cap C^{2,1}(\Omega\times(0,\infty))$ satisfy
\begin{equation}\label{lem:comparison-1}
\begin{split}
\mbox{either}\;\; & \left\{
\begin{array}{l}
  v_t\ge(\sigma(v))_{xx} \\
  w_t<(\sigma(w))_{xx}
\end{array}\right. \;\;\mbox{in $\Omega\times(0,\infty)$}\\
\mbox{or}\;\; &
\left\{
\begin{array}{l}
  v_t>(\sigma(v))_{xx} \\
  w_t\le(\sigma(w))_{xx}
\end{array}\right. \;\;\mbox{in $\Omega\times(0,\infty)$}
\end{split}
\end{equation}
and $v>w$ on $\partial(\Omega\times(0,\infty)).$
Then $v>w$ in $\Omega\times(0,\infty)$.
\end{lem}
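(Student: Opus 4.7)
The plan is to argue by contradiction. First I would suppose there exists $(x_0,t_0)\in\Omega\times(0,\infty)$ with $v(x_0,t_0)\le w(x_0,t_0)$, and introduce the \emph{first touching time}
\[
t^*:=\inf\bigl\{t>0:\,v(x,t)\le w(x,t)\;\text{for some}\;x\in\Omega\bigr\}.
\]
The strict inequality $v>w$ on the parabolic boundary of $\Omega\times(0,\infty)$, combined with compactness of $\bar\Omega$ and the uniform continuity of $v-w$ on compact subcylinders, forces $t^*>0$; and the existence of $(x_0,t_0)$ forces $t^*\le t_0<\infty$. Picking sequences $t_n\searrow t^*$ and $x_n\in\Omega$ with $v(x_n,t_n)\le w(x_n,t_n)$, a subsequence converges to some $x^*\in\bar\Omega$ at which, by continuity and the definition of $t^*$, one has $v(x^*,t^*)=w(x^*,t^*)$; the boundary hypothesis on the lateral sides $\{0,L\}\times[0,\infty)$ rules out $x^*\in\{0,L\}$, hence $x^*\in\Omega$. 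Writing $u:=v-w$, one therefore has $u\ge 0$ on $\bar\Omega\times[0,t^*]$ with $u(x^*,t^*)=0$.

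At this contact point, the interior-minimum test in $x$ gives $v_x=w_x$ and $v_{xx}\ge w_{xx}$, and the one-sided test in $t$ (using $u\ge 0$ for $t\le t^*$ and $u=0$ at $t=t^*$) gives $v_t\le w_t$. The crucial observation is that $v=w$ at $(x^*,t^*)$, so when one expands
\[
(\sigma(v))_{xx}-(\sigma(w))_{xx}=\sigma'(v)v_{xx}+\sigma''(v)(v_x)^2-\sigma'(w)w_{xx}-\sigma''(w)(w_x)^2,
\]
the quadratic terms cancel (since $\sigma''(v)=\sigma''(w)$ and $v_x=w_x$) and the whole difference collapses to $\sigma'(v(x^*,t^*))\bigl(v_{xx}(x^*,t^*)-w_{xx}(x^*,t^*)\bigr)\ge 0$, using the standing assumption $\sigma'\ge 0$.

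Finally, subtracting the two inequalities in either alternative of hypothesis (\ref{lem:comparison-1}) yields the strict pointwise inequality $v_t-w_t>(\sigma(v))_{xx}-(\sigma(w))_{xx}$ throughout $\Omega\times(0,\infty)$. Evaluating at $(x^*,t^*)$ and combining with the sign-reduction above gives $v_t-w_t>0$ there, contradicting $v_t\le w_t$. I expect the only real obstacle to be the careful setup of $t^*$, namely verifying $0<t^*<\infty$ and that the first touching point lies in the spatial interior; once this is done, the identity $v(x^*,t^*)=w(x^*,t^*)$ is precisely what reduces the difference of quasilinear diffusion terms to a sign-controlled expression, and the strictness built into (\ref{lem:comparison-1}) then delivers the contradiction.
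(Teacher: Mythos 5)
Your proof is correct and follows essentially the same first-touching-time argument as the paper: locate an interior contact point $(x^*,t^*)$ where $v=w$, use that equality to collapse $(\sigma(v))_{xx}-(\sigma(w))_{xx}$ to $\sigma'(v)(v_{xx}-w_{xx})\ge 0$, and contradict the strict differential inequality obtained by subtracting the two hypotheses. The only cosmetic difference is that the paper works with the weighted function $z=e^{-t}(v-w)$, whose extra term vanishes at the contact point anyway, so your unweighted version loses nothing.
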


\begin{proof}
To prove by contradiction, let us suppose that
\[
v(x_0,t_0)\le w(x_0,t_0)\;\;\mbox{for some $(x_0,t_0)\in\Omega\times(0,\infty)$}.
\]
Fix a number $T\in(t_0+1,\infty)$, and define
\[
z(x,t)=e^{-t}(v(x,t)-w(x,t))\;\;\forall(x,t)\in\bar\Omega\times[0,T].
\]
Then
\begin{equation}\label{lem:comparison-2}
z>0\;\;\mbox{on $(\Omega\times\{0\})\cup(\partial\Omega\times[0,T])$}\;\;\mbox{and} \;\; z(x_0,t_0)\le0.
\end{equation}

Define
\[
\mathcal{T}=\big\{t\in[0,T]\,|\,z>0\;\mbox{on}\;\bar\Omega\times[0,t]\big\}\;\; \mbox{and}\;\;t^*=\sup\mathcal{T}.
\]
By (\ref{lem:comparison-2}), we have $0\in\mathcal{T}$ and $t\not\in\mathcal{T}$ for all $t_0\le t\le T$; hence $\{0\}\subset\mathcal{T}\subset[0,t_0)$, and so $0\le t^*\le t_0<T-1$. By the definition of $t^*$, for each $j\in\N$, we have
\[
z(x_j,t_j)\le 0\;\;\mbox{for some $(x_j,t_j)\in\bar{\Omega}\times[t^*,t^*+1/j)$}.
\]
Passing to a subsequence if necessary, we have $x_j\to x^*$ as $j\to\infty$ for some $x^*\in\bar\Omega$; then $z(x^*,t^*)\le0$. From this and (\ref{lem:comparison-2}), we now see that $x^*\in\Omega$ and $0<t^*\le t_0.$ Also, by the definition of $t^*$, we have $z>0$ on $\bar\Omega\times[0,t^*)$. So we easily deduce that at the point $(x,t)=(x^*,t^*)$,
\begin{equation}\label{lem:comparison-3}
z=0,\;\;z_x=0,\;\;z_{xx}\ge0\;\;\mbox{and}\;\;z_t\le0;
\end{equation}
thus  at $(x,t)=(x^*,t^*)$,
\begin{equation}\label{lem:comparison-4}
v=w,\;\;v_x=w_x\;\;\mbox{and}\;\;v_{xx}\ge w_{xx}.
\end{equation}

On one hand, we have from (\ref{lem:comparison-3}) that
\[
0\ge z_t(x^*,t^*)=e^{-t^*}((v-w)_t(x^*,t^*)-(v-w)(x^*,t^*));
\]
thus by (\ref{lem:comparison-4}), we get
\[
(v-w)_t(x^*,t^*)\le(v-w)(x^*,t^*)=0.
\]
On the other hand, it follows from (\ref{lem:comparison-1}), (\ref{lem:comparison-4}) and $\sigma'\ge0$ that
\[
\begin{split}
(v-w)_t(x^*,t^*)> & \sigma'(v(x^*,t^*))v_{xx}(x^*,t^*)- \sigma'(w(x^*,t^*))w_{xx}(x^*,t^*) \\
& +\sigma''(v(x^*,t^*))(v_x(x^*,t^*))^2- \sigma''(w(x^*,t^*))(w_x(x^*,t^*))^2 \\
= & \sigma'(v(x^*,t^*))(v_{xx}(x^*,t^*)-w_{xx}(x^*,t^*))\ge 0;
\end{split}
\]
that is, $(v-w)_t(x^*,t^*)>0$.
We thus have a contradiction.
\end{proof}

%\begin{eg}
%As an illustration, let us apply the previous result to the simplest case of heat equation with $\Omega=(0,1)$; that is, $\sigma(\rho)\equiv\rho.$ In this case, we have $\theta=1$, $\tilde\theta_0=0$ and $L=1$. It thus follows that for all $t\ge0$,
%\[
%\|\rho(\cdot,t)\|_\infty\le \|\rho_0\|_\infty\frac{m-e^{-\lambda}}{m-1}e^{-\frac{\tau\lambda^2 e^{-\lambda}}{m-e^{-\lambda}}t},
%\]
%where $0<\tau<1$, $\lambda>0$ and $m>1$ are arbitrary. Letting $\tau\to 1^-,$ we have that for all $t\ge 0$,
%\[
%\|\rho(\cdot,t)\|_\infty\le \|\rho_0\|_\infty\frac{m-e^{-\lambda}}{m-1}e^{-\frac{\lambda^2 e^{-\lambda}}{m-e^{-\lambda}}t}.
%\]

%\end{eg}

\section{Proof of Theorem \ref{thm:uniform-conv}}\label{sec:proof-main-thm}
%In this section, we complete the proof of Theorem \ref{thm:uniform-conv} and Corollary \ref{coro:uniform-conv}.

%\subsection{Proof of Theorem \ref{thm:uniform-conv}}
Using the comparison principle, Lemma \ref{lem:comparison}, we now prove our main result, Theorem \ref{thm:uniform-conv}.

To start the proof, fix any $0<\tau<1$, $\lambda>0$ and $m>1$. Let $0<\epsilon\le 1$. Define
\[
\varphi(x)=\varphi_{s,\lambda}(x)=s-e^{-\lambda  x}\;\;\mbox{for all $x\in\bar\Omega$,}
\]
where $s\in[m,\infty)$ is a constant to be specified later. Then
\[
\delta_0:=\min_{\bar\Omega}\varphi=s-1>0\;\;\mbox{and} \;\;\delta_1:=\max_{\bar\Omega}\varphi=s-e^{-\lambda L}.
\]
Next, we define that for all $(x,t)\in\bar\Omega\times[0,\infty),$
\[
\psi(x,t)  =\psi_{s,\lambda,\gamma,A}(x,t)=A\frac{\varphi(x)}{\delta_0}e^{-\gamma t}\;\;\mbox{and}\;\;
\tilde\psi(x,t)  =-\psi(x,t).
\]
where $\gamma>0$ and $A>0$ are constants to be chosen below.
Observe that for all $x\in\bar\Omega,$
\[
\psi(x,0)= A\frac{\varphi(x)}{\delta_0} \ge A\;\;\mbox{and}\;\;\tilde\psi(x,0)=-\psi(x,0)\le -A
\]
and that for all $t>0$,
\[
\psi(0,t)=Ae^{-\gamma t}>0,\;\; \psi(L,t)=A\frac{\delta_1}{\delta_0}e^{-\gamma t} >0,
\]
\[
\tilde\psi(0,t)=-\psi(0,t)<0\;\;\mbox{and}\;\;\tilde\psi(L,t) =-\psi(L,t)<0.
\]
We choose $A=A_\epsilon=\|\rho_0\|_\infty+\epsilon$; then it  follows that
\begin{equation}\label{thm:uniform-conv-1}
\tilde\psi<\rho<\psi\;\;\mbox{on $\partial(\Omega\times(0,\infty))$}.
\end{equation}

Let $(x,t)\in\Omega\times(0,\infty)$. We first compute
\[
\psi_x(x,t)  = \frac{A_\epsilon}{\delta_0}\lambda e^{-\lambda x}e^{-\gamma t},\;\; \psi_{xx}(x,t)  =  -\frac{A_\epsilon}{\delta_0}\lambda^2 e^{-\lambda x}e^{-\gamma t}\;\;\mbox{and}
\]
\[
\psi_t(x,t)  =-\frac{A_\epsilon}{\delta_0}\gamma(s-e^{-\lambda x})e^{-\gamma t}.
\]
Using these, we get
\[
\begin{split}
\mathcal{L}\psi(x,t)  := & -\psi_t(x,t) +(\sigma(\psi(x,t)))_{xx} \\
= & -\psi_t(x,t) +\sigma'(\psi(x,t))\psi_{xx}(x,t)+\sigma''(\psi(x,t))(\psi_x(x,t))^2\\
= & \frac{A_\epsilon}{\delta_0}\gamma(s-e^{-\lambda x})e^{-\gamma t} -\sigma'(\psi(x,t)) \frac{A_\epsilon}{\delta_0}\lambda^2 e^{-\lambda x}e^{-\gamma t} \\
& + \sigma''(\psi(x,t)) \frac{A_\epsilon^2}{\delta^2_0}\lambda^2 e^{-2\lambda x}e^{-2\gamma t}
\end{split}
\]
and
\[
\begin{split}
\mathcal{L}\tilde\psi(x,t) = &  -\frac{A_\epsilon}{\delta_0}\gamma(s-e^{-\lambda x})e^{-\gamma t} +\sigma'(\tilde\psi(x,t)) \frac{A_\epsilon}{\delta_0}\lambda^2 e^{-\lambda x}e^{-\gamma t}  \\
& + \sigma''(\tilde\psi(x,t)) \frac{A_\epsilon^2}{\delta_0^2}\lambda^2 e^{-2\lambda x}e^{-2\gamma t}.
\end{split}
\]
From the definition of $\psi$ and the choice $s\ge m>1$, we see that
\[
0<\psi(x,t)\le A_\epsilon\frac{\delta_1}{\delta_0}= A_\epsilon\frac{s-e^{-\lambda L}}{s-1}< A_\epsilon\frac{s}{s-1} \le A_\epsilon\frac{m}{m-1}=:R_{\epsilon}
\]
and that
\[
-R_\epsilon<\tilde\psi(x,t)=-\psi(x,t)<0.
\]
We set
\[
\theta_{\epsilon}=\min_{[-R_\epsilon,R_\epsilon]}\sigma'>0\;\;\mbox{and}\;\; \tilde\theta_{\epsilon}=\max_{[-R_\epsilon,R_\epsilon]}|\sigma''|;
\]
then
\[
\begin{split}
\mathcal{L}\psi(x,t) \le & \frac{A_\epsilon}{\delta_0}\gamma(s-e^{-\lambda x})e^{-\gamma t} - \theta_\epsilon\frac{A_\epsilon}{\delta_0}\lambda^2 e^{-\lambda x}e^{-\gamma t} +\tilde\theta_\epsilon \frac{A_\epsilon^2}{\delta_0^2}\lambda^2 e^{-2\lambda x}e^{-2\gamma t} \\
= & \Big(\frac{A_\epsilon}{\delta_0}\gamma (s-e^{-\lambda x}) e^{-\gamma t} - \tau\frac{\theta_\epsilon A_\epsilon}{\delta_0}\lambda^2 e^{-\lambda x}e^{-\gamma t}\Big)  \\
& + \Big(  \frac{\tilde\theta_\epsilon A_\epsilon^2}{\delta_0^2}\lambda^2 e^{-2\lambda x}e^{-2\gamma t} - (1-\tau)\frac{\theta_\epsilon A_\epsilon}{\delta_0}\lambda^2 e^{-\lambda x}e^{-\gamma t} \Big) \\
=&:  I_1+I_2
\end{split}
\]
and
\[
\begin{split}
\mathcal{L}\tilde\psi(x,t) \ge & -\frac{A_\epsilon}{\delta_0}\gamma(s-e^{-\lambda x})e^{-\gamma t} + \theta_\epsilon \frac{A_\epsilon}{\delta_0}\lambda^2 e^{-\lambda x}e^{-\gamma t} -\tilde\theta_\epsilon \frac{A_\epsilon^2}{\delta_0^2}\lambda^2 e^{-2\lambda x}e^{-2\gamma t}  \\
= & \Big(-\frac{A_\epsilon}{\delta_0}\gamma (s-e^{-\lambda x}) e^{-\gamma t} +  \tau\frac{\theta_\epsilon A_\epsilon}{\delta_0}\lambda^2 e^{-\lambda x}e^{-\gamma t}\Big)  \\
& + \Big(  -\frac{\tilde\theta_\epsilon A_\epsilon^2}{\delta_0^2}\lambda^2 e^{-2\lambda x}e^{-2\gamma t} + (1-\tau)\frac{\theta_\epsilon A_\epsilon}{\delta_0}\lambda^2 e^{-\lambda x}e^{-\gamma t} \Big) \\
=&:  J_1+J_2.
\end{split}
\]

We now control the quantities $I_1=-J_1$ and $I_2=-J_2$.  Since $\lambda x>0$ and $\gamma t>0$, we have
\[
\begin{split}
I_2 & \le \frac{\tilde\theta_{\epsilon} A_\epsilon^2}{\delta_0^2}\lambda^2 e^{-\lambda x}e^{-\gamma t} - (1-\tau)\frac{\theta_\epsilon A_\epsilon}{\delta_0}\lambda^2 e^{-\lambda x}e^{-\gamma t} \\
& = \frac{A_\epsilon\lambda^2 e^{-\lambda x}e^{-\gamma t}}{\delta_0}\Big( \frac{A_\epsilon\tilde\theta_{\epsilon}}{\delta_0}-(1-\tau)\theta_\epsilon \Big)<0
\end{split}
\]
and $J_2=-I_2>0$
provided that the constant $s\in[m,\infty)$ is chosen so large that
\begin{equation}\label{thm:uniform-conv-2}
s>\frac{A_\epsilon\tilde\theta_{\epsilon}}{(1-\tau)\theta_\epsilon}+1.
\end{equation}
Next, we have
\[
I_1 =  \frac{A_\epsilon e^{-\gamma t}}{\delta_0}(\gamma (s-e^{-\lambda x}) - \tau\theta_\epsilon\lambda^2 e^{-\lambda x})\le 0\;\;\mbox{and} \;\; J_1=-I_1\ge 0
\]
if the constant $\gamma>0$ is chosen so small that
\begin{equation}\label{thm:uniform-conv-3}
\gamma\le\frac{\tau\theta_\epsilon\lambda^2e^{-\lambda L}}{s-e^{-\lambda L}}.
\end{equation}

In summary, if we let
\[
%\begin{split}
%0 <\epsilon\le1  ,\;\;0<\tau<1,\;\;\lambda>0, \\
A=A_\epsilon  =\|\rho_0\|_\infty+\epsilon,\;\;R_{\epsilon}  =A_\epsilon\frac{m}{m-1},\;\;\theta_{\epsilon} =\min_{[-R_{\epsilon},R_{\epsilon}]}\sigma', \;\;\tilde\theta_{\epsilon} =\max_{[-R_{\epsilon},R_{\epsilon}]}|\sigma''|,
\]
\[
s=s_\epsilon=\max\Big\{\frac{A_\epsilon\tilde\theta_{\epsilon}} {(1-\tau)\theta_\epsilon}+1,m \Big\}+\epsilon, \;\;\mbox{and} \;\;
\gamma=\gamma_\epsilon  =\frac{\tau\theta_\epsilon\lambda^2e^{-\lambda L}}{s_\epsilon-e^{-\lambda L}},
%\end{split}
\]
then (\ref{thm:uniform-conv-2}) and (\ref{thm:uniform-conv-3}) are satisfied so that  for all $(x,t)\in\Omega\times(0,\infty),$
\[
\mathcal{L}\psi(x,t)\le I_1+I_2 <0< J_1+J_2 \le \mathcal{L}\tilde\psi(x,t).
\]
With this and $(\ref{thm:uniform-conv-1})$, we can apply Lemma \ref{lem:comparison} to obtain that for all $(x,t)\in\Omega\times(0,\infty)$,
\[
-\psi(x,t)=\tilde\psi(x,t)<\rho(x,t)<\psi(x,t),
\]
that is,
\[
|\rho(x,t)|<\psi(x,t)\le A_\epsilon\frac{\delta_1}{\delta_0} e^{-\gamma_\epsilon t} =A_\epsilon\frac{s_\epsilon-e^{-\lambda L}}{s_\epsilon-1} e^{-\gamma_\epsilon t}.
\]
Letting $\epsilon\to 0^+,$ it follows that for all $(x,t)\in\Omega\times(0,\infty)$,
\[
\begin{split}
|\rho(x,t)|\le & \|\rho_0\|_\infty \frac{\max\Big\{\frac{\|\rho_0\|_\infty \tilde\theta}{(1-\tau)\theta}+1,m \Big\}-e^{-\lambda L}}{\max\Big\{\frac{\|\rho_0\|_\infty \tilde\theta}{(1-\tau)\theta}+1,m \Big\}-1} \\
&\times \exp\bigg(-\frac{\tau\theta\lambda^2e^{-\lambda L}}{\max\Big\{\frac{\|\rho_0\|_\infty \tilde\theta}{(1-\tau)\theta}+1,m \Big\}-e^{-\lambda L}} t\bigg),
\end{split}
\]
where
\[
\theta:=\min_{[-\|\rho_0\|_\infty\frac{m}{m-1}, \|\rho_0\|_\infty\frac{m}{m-1}]}\sigma'\;\;\mbox{and}\;\;
\tilde\theta:=\max_{[-\|\rho_0\|_\infty\frac{m}{m-1}, \|\rho_0\|_\infty\frac{m}{m-1}]}|\sigma''|.
\]

The proof of Theorem \ref{thm:uniform-conv} is now complete.

\section{Applications}\label{sec:applications}

In this section, we apply the main result, Theorem \ref{thm:uniform-conv}, to the initial-boundary value problems concerning  three types of equations  in one space dimension. In particular, we are mainly interested in estimating an exponential rate of convergence for evolutions arising in population dynamics and image processing.

Throughout this section, let $\alpha\in(0,1)$ be any fixed number representing a H\"older exponent.

\subsection{Heat diffusion}

As a model example, we consider the initial value problem of the heat equation
\begin{equation}\label{Heat-iP}
\left\{
\begin{array}{ll}
  \rho_t=\rho_{xx} & \mbox{in $\Omega\times (0,\infty)$,} \\
  \rho=\rho_0 & \mbox{on $\Omega\times\{t=0\}$},
\end{array}
 \right.
\end{equation}
coupled with either the Dirichlet boundary condition
\begin{equation}\label{Heat-D-bdry}
\rho=0\quad\mbox{on $\partial\Omega\times(0,\infty)$}
\end{equation}
or the Neumann boundary condition
\begin{equation}\label{Heat-N-bdry}
\rho_x=0\quad\mbox{on $\partial\Omega\times(0,\infty)$},
\end{equation}
where $\Omega=(0,1)\subset\R$, $\rho_0=\rho_0(x)$ is a given initial datum, and $\rho=\rho(x,t)$ is a solution to the problem.

Let us first assume that $\rho\in C(\bar\Omega\times[0,\infty))\cap C^{2,1}(\Omega\times(0,\infty))$ is a solution to problem (\ref{Heat-iP})(\ref{Heat-D-bdry}), where $\rho_0\in C(\bar\Omega)$ satisfies the compatibility condition $\rho_0(0)=\rho_0(1)=0.$ Following the notation of Theorem \ref{thm:uniform-conv}, we now have $L=1$, $\theta=1$ and $\tilde\theta=0$; thus letting $\tau\to 1^-$ to the result of the theorem, we obtain that for all $t\ge 0$,
\[
\|\rho(\cdot,t)\|_\infty\le \|\rho_0\|_\infty \frac{m-e^{-\lambda}}{m-1} e^{-\frac{\lambda^2 e^{-\lambda}}{m-e^{-\lambda}}t},
\]
where $\lambda>0$ and $m>1$ are arbitrary. Here, the least upper bound for the rates $\gamma(\lambda,m)=\frac{\lambda^2 e^{-\lambda}}{m-e^{-\lambda}}$ with $\lambda>0$ and $m>1$ lies in the interval $(0.64,0.65)$. However, in this case, it is well known  from the explicit formula \cite[Chapter 6]{Fr} that the exponential rate of convergence for the solution $\rho$ can be $\pi^2$, which is much larger than our value. Thus our result may not give an optimal rate of convergence.

Next, we assume that $\rho\in C^{2,1}(\bar\Omega\times[0,\infty))$ is a solution to problem (\ref{Heat-iP})(\ref{Heat-N-bdry}), where $\rho_0\in C^2(\bar\Omega)$ satisfies the compatibility condition $\rho_0'(0)=\rho_0'(1)=0.$ By the smoothing effect of the heat diffusion, we know that $\rho$ is smooth in $\Omega\times(0,\infty).$ Let $w=\rho_x\in C^{1,0}(\bar\Omega\times[0,\infty))\cap C^\infty(\Omega\times(0,\infty))$; then, with $w_0=\rho_0'\in C^1(\bar\Omega)$, it is easy to see that $w$ solves problem (\ref{Heat-iP})(\ref{Heat-D-bdry}), where $\rho$ and $\rho_0$ are replaced by $w$ and $w_0$, respectively. Therefore, we can use the above result and Poincar\'e's inequality to conclude that
\[
\|\rho(\cdot,t)-\bar\rho_0\|_{W^{1,\infty}(\Omega)}\le C e^{-\gamma t}\quad\forall t\ge0,
\]
where $\bar\rho_0:=\int_0^1\rho_0(x)\,dx$, $\lambda>0$, $m>1$, $\gamma=\frac{\lambda^2 e^{-\lambda}}{m-e^{-\lambda}}$, and $C>0$ is a constant depending only on $\|\rho_0'\|_\infty$, $m$ and $\lambda$.

\subsection{Aggregative movement in population dynamics}

The evolution process for spatial distribution of animals or biological organisms in one-dimensional homogeneous habitat can be modeled by quasilinear advection-diffusion equations of the form
\begin{equation}\label{Population-P}
\rho_t=(\sigma(\rho))_{xx},
\end{equation}
where $\rho=\rho(x,t)$ denotes the population density of a single species at position $x$ and time $t$, and the derivative of a given flux function $\sigma=\sigma(s)$ is the \emph{diffusivity} of equation (\ref{Population-P}).

As an alternative to the $\Delta$-model proposed by {Taylor and Taylor} \cite{TT} for modeling aggregative movement, {Turchin} \cite{Tu} derived equation (\ref{Population-P}), based on a random walk approach \cite{Ok}, as a model of individual movement, which is not only reflecting aggregation or repulsion between conspecific organisms but also avoiding some defects in their model. In his model, the flux function $\sigma$ is given by
\begin{equation*}%\label{Population-P-Turchin}
\sigma(s)=\frac{2k_0}{3\omega}s^3 -k_0 s^2 +\frac{\mu}{2} s,
\end{equation*}
where $k_0>0$ is the maximum degree of gregariousness, $\omega>0$ is the critical density at which movement switches from aggregative to repulsive, and $\mu\in(0,1]$ is the motility rate.

On the other hand, {Anguige and Schmeiser} \cite{AS} independently obtained equation (\ref{Population-P}), based also on the random walk approach, as a model of cell motility which incorporates the effects of cell-to-cell adhesion and volume filling. In their model, the flux function $\sigma$ is given by
\begin{equation}\label{Population-P-AS}
\sigma(s)=a s^3-2a s^2+s,
\end{equation}
where $a\in [0,1]$ is the adhesion constant.

For definiteness, let us adopt the flux function $\sigma$ in (\ref{Population-P-AS}) and consider the initial-boundary value problem
\begin{equation}\label{Population-ibP}
\left\{
\begin{array}{ll}
  \rho_t=(\sigma(\rho))_{xx} & \mbox{in $\Omega\times(0,\infty)$}, \\
  \rho=\rho_0 & \mbox{on $\Omega\times\{t=0\}$}, \\
  \rho(0,t)=\rho(L,t)=0 & \mbox{for $t\in(0,\infty)$},
\end{array}\right.
\end{equation}
where $\Omega=(0,L)\subset\R$ is a favorable habitat of size $L>0$, and $\rho_0(x)$ is the initial density of a given species at position $x$. Here, the \emph{absorbing} boundary condition $\rho(0,t)=\rho(L,t)=0$ means in a viewpoint of ecology that animals touching the border $\partial\Omega=\{0,L\}$ are permanently lost to the population, either because they move away from the habitat $\Omega$ or because they are killed by predators residing in the very hostile surrounding area $\R\setminus\bar\Omega$.

Note that the diffusivity $\sigma'$ is
\[
\sigma'(s)=3a s^2-4a s+1=3a\Big(s-\frac{2}{3}\Big)^2 +1-\frac{4}{3}a.
\]
In case of a weak adhesion effect with $0\le a<\frac{3}{4}$, the diffusivity $\sigma'$ is positive everywhere with absolute minimum value $1-\frac{4}{3}a>0$, and so problem (\ref{Population-ibP}) is well-posed and admits a global classical solution $\rho$ for all sufficiently smooth initial data $\rho_0$ with $\rho_0(0)=\rho_0(L)=0$. In a highly aggregative species with $\frac{3}{4}<a\le1,$ since the diffusivity $\sigma'$ can take both positive and negative values, (\ref{Population-ibP}) is ill-posed and may not even possess a local classical solution for some smooth initial data. The critical adhesion constant $a=\frac{3}{4}$ makes (\ref{Population-ibP}) \emph{degenerate parabolic}; we do not handle this case here.

To be more specific, let us first consider the case that $0\le a<\frac{3}{4}.$ Let $\rho_0\in C^{2+\alpha}(\bar\Omega)$ be such that $\rho_0\ge0$ in $\Omega$ and $\rho_0(0)=\rho_0(L)=0$. Then it follows from \cite[Theorem 12.14]{Ln} that there exists a unique solution $\rho\in C^{2,1}(\bar\Omega\times[0,\infty))$ to problem (\ref{Population-ibP}) such that $\rho\in C^{2+\alpha,1+\frac{\alpha}{2}}(\bar\Omega\times[0,T])$ for each $T>0.$ From Lemma \ref{lem:maximum} and the initial and boundary conditions, we have that for all $t_2>t_1\ge0$,
\[
0=\min_{\bar\Omega}\rho(\cdot,t_1)=\min_{\bar\Omega}\rho(\cdot,t_2)\le \max_{\bar\Omega}\rho(\cdot,t_2)\le \max_{\bar\Omega}\rho(\cdot,t_1).
\]
To apply Theorem \ref{thm:uniform-conv}, fix any $0<\tau<1$, $\lambda>0$ and $m>1$. Following the notation of the theorem, we easily get
\[
\theta=\left\{
\begin{array}{ll}
  3a(\|\rho_0\|_\infty\frac{m}{m-1}-\frac{2}{3})^2 +1-\frac{4}{3}a & \mbox{if $\|\rho_0\|_\infty\frac{m}{m-1}<\frac{2}{3}$,} \\
  1-\frac{4}{3}a & \mbox{if $\|\rho_0\|_\infty\frac{m}{m-1}\ge\frac{2}{3}$,}
\end{array}
 \right.
\]
and
\[
\tilde\theta=2a\Big(3\|\rho_0\|_\infty\frac{m}{m-1}+2\Big).
\]
With these numbers, rate of convergence (\ref{thm:uniform-conv-rate}) follows from Theorem \ref{thm:uniform-conv}.

Next, assume that $\frac{3}{4}<a\le 1.$ To deal with only \emph{classical} solutions, let $\rho_0\in C^{2+\alpha}(\bar\Omega)$ be such that $\rho_0\ge0$ in $\Omega$, $\rho_0(0)=\rho_0(L)=0$, and
\[
0<b_0:=\|\rho_0\|_\infty< \frac{2a-\sqrt{a(4a-3)}}{3a};
\]
then $\frac{2a-\sqrt{a(4a-3)}}{3a b_0}>1$ so that there exists a unique number $m^*>1$ with $\frac{m^*}{m^*-1}= \frac{2a-\sqrt{a(4a-3)}}{3a b_0}$, that is, $m^*=\frac{2a-\sqrt{a(4a-3)}} {a(2-3b_0)-\sqrt{a(4a-3)}}$. Let fix any $0<\tau<1$, $\lambda>0$ and $m>m^*;$ here, $b_0\frac{m}{m-1}< \frac{2a-\sqrt{a(4a-3)}}{3a}$. Set $\bar{s}=\frac{1}{2}\big(b_0\frac{m}{m-1}+ \frac{2a-\sqrt{a(4a-3)}}{3a}\big)$. We then modify the function $\sigma(s)=a s^3-2a s^2+s$ so as to obtain a function $\tilde\sigma\in C^3(\R)$ such that
\begin{equation}\label{Population-1}
\left\{
\begin{array}{l}
  \tilde\sigma(s)=\sigma(s)\;\;\forall s\in[-\bar{s},\bar{s}], \\
  \lambda\le\tilde\sigma'(s)\le\Lambda\;\;\forall s\in\R,  \\
  \mbox{$\tilde\sigma'''$ is bounded in $\R$,}
\end{array}
\right.
\end{equation}
where $\Lambda>\lambda>0$ are some constants. It now follows from \cite[Theorem 12.14]{Ln} that there exists a unique solution $\rho\in C^{2,1}(\bar\Omega\times[0,\infty))$ to the initial-boundary value problem
\begin{equation}\label{Population-2}
\left\{
\begin{array}{ll}
  \rho_t=(\tilde\sigma(\rho))_{xx} & \mbox{in $\Omega\times(0,\infty)$}, \\
  \rho=\rho_0 & \mbox{on $\Omega\times\{t=0\}$}, \\
  \rho(0,t)=\rho(L,t)=0 & \mbox{for $t\in(0,\infty)$},
\end{array}
\right.
\end{equation}
such that $\rho\in C^{2+\alpha,1+\frac{\alpha}{2}}(\bar\Omega\times[0,T])$ for each $T>0$. From Lemma \ref{lem:maximum} and the initial and boundary conditions, we have that for all $t_2>t_1\ge0$,
\[
0=\min_{\bar\Omega}\rho(\cdot,t_1)=\min_{\bar\Omega}\rho(\cdot,t_2)\le \max_{\bar\Omega}\rho(\cdot,t_2)\le \max_{\bar\Omega}\rho(\cdot,t_1).
\]
In particular, we have $0\le \rho\le \|\rho_0\|_{\infty}$ in $\Omega\times(0,\infty)$; thus from (\ref{Population-1}) and (\ref{Population-2}), we see that $\rho$ is also a solution to problem (\ref{Population-ibP}). Using the notation of Theorem \ref{thm:uniform-conv}, it follows from (\ref{Population-1}) that
\[
\theta=  3a\Big(\|\rho_0\|_\infty\frac{m}{m-1}-\frac{2}{3}\Big)^2 +1-\frac{4}{3}a
\]
and
\[
\tilde\theta=2a\Big(3\|\rho_0\|_\infty\frac{m}{m-1}+2\Big).
\]
With these numbers, rate of convergence (\ref{thm:uniform-conv-rate}) follows from the theorem.

We now summarize what we have discussed so far as follows. The first one is on the large time behaviors of classical solutions to problem (\ref{Population-ibP}) under a weak adhesion effect for all smooth and positive initial data.

\begin{thm}[Weak aggregation]\label{thm:application-Population-wk}
Let $\sigma$ be the flux function given by (\ref{Population-P-AS}). Assume $0\le a<\frac{3}{4}$. Let $\rho_0\in C^{2+\alpha}(\bar\Omega)$ be such that $\rho_0\ge 0$ in $\Omega$ and $\rho_0(0)=\rho_0(L)=0$. Then there exists a unique solution $\rho\in C^{2,1}(\bar\Omega\times[0,\infty))$ to problem (\ref{Population-ibP}) satisfying the following:
\begin{itemize}
\item[(i)] $\rho\in C^{2+\alpha,1+\frac{\alpha}{2}}(\bar\Omega\times[0,T])$ for each $T>0$,
\item[(ii)] $0=\min_{\bar\Omega}\rho(\cdot,t_1)=\min_{\bar\Omega}\rho(\cdot,t_2)\le \max_{\bar\Omega}\rho(\cdot,t_2)\le \max_{\bar\Omega}\rho(\cdot,t_1)$ for all $t_2>t_1\ge0$,
\item[(iii)] $\|\rho(\cdot,t)\|_\infty\le C e^{-\gamma t}$ for all $t\ge 0$, where $0<\tau<1$, $\lambda>0$, $m>1$,
\[
\theta:=\left\{
\begin{array}{ll}
  3a(\|\rho_0\|_\infty\frac{m}{m-1}-\frac{2}{3})^2 +1-\frac{4}{3}a & \mbox{if $\|\rho_0\|_\infty\frac{m}{m-1}<\frac{2}{3}$,} \\
  1-\frac{4}{3}a & \mbox{if $\|\rho_0\|_\infty\frac{m}{m-1}\ge\frac{2}{3}$,}
\end{array}
 \right.
\]
\[
\tilde\theta:=2a\Big(3\|\rho_0\|_\infty\frac{m}{m-1}+2\Big),
\]
\[
\gamma:= \frac{\tau\theta\lambda^2e^{-\lambda L}}{\max\Big\{\frac{\|\rho_0\|_\infty \tilde\theta}{(1-\tau)\theta}+1,m \Big\}-e^{-\lambda L}},\;\;\mbox{and}
\]
\[
C:= \|\rho_0\|_\infty \frac{\max\Big\{\frac{\|\rho_0\|_\infty \tilde\theta}{(1-\tau)\theta}+1,m \Big\}-e^{-\lambda L}}{\max\Big\{\frac{\|\rho_0\|_\infty \tilde\theta}{(1-\tau)\theta}+1,m \Big\}-1}.
\]
\end{itemize}
\end{thm}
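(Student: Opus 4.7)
The plan is to assemble three ingredients that have already been laid out in the preceding discussion of Section \ref{sec:applications}. First, I would observe that when $0 \le a < 3/4$ the flux $\sigma(s) = as^3 - 2as^2 + s$ has strictly positive derivative everywhere: since $\sigma'(s) = 3a(s - 2/3)^2 + 1 - 4a/3$, the global minimum is $1 - 4a/3 > 0$. This uniform parabolicity places problem (\ref{Population-ibP}) in the framework of \cite[Theorem 12.14]{Ln}, which yields a unique classical solution $\rho \in C^{2,1}(\bar\Omega \times [0,\infty))$ together with the Schauder regularity $\rho \in C^{2+\alpha, 1+\alpha/2}(\bar\Omega \times [0,T])$ on every finite time interval. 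This settles (i).

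For (ii), I would apply Lemma \ref{lem:maximum} on the time interval $[t_1,\infty)$, treating $\rho(\cdot,t_1)$ as a new initial datum (which still satisfies the compatibility condition $\rho(0,t_1) = \rho(L,t_1) = 0$). The lemma then yields $\min_{\bar\Omega}\rho(\cdot,t_1) \le \rho(\cdot,t_2) \le \max_{\bar\Omega}\rho(\cdot,t_1)$ for all $t_2 > t_1$. Since $\rho_0 \ge 0$ with zero boundary values, the initial application of the principle forces $\min_{\bar\Omega}\rho(\cdot,t) = 0$ for every $t \ge 0$, which, combined with the preceding monotonicity, gives (ii).

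For (iii), I would invoke Theorem \ref{thm:uniform-conv} with the given $\sigma$, $\tau$, $\lambda$, $m$, and compute $\theta$ and $\tilde\theta$ on $[-R,R]$ with $R = \|\rho_0\|_\infty \frac{m}{m-1}$. From the form $\sigma'(s) = 3a(s-2/3)^2 + 1 - 4a/3$, the minimum over $[-R,R]$ equals $1 - 4a/3$ when $2/3 \in [-R,R]$, i.e.\ when $R \ge 2/3$, and equals $3a(R - 2/3)^2 + 1 - 4a/3$ when $R < 2/3$ (attained at the endpoint $s = R$, which is the point in $[-R,R]$ closest to $2/3$). From $\sigma''(s) = 2a(3s-2)$, a direct inspection shows that $|\sigma''|$ on $[-R,R]$ is maximized at $s = -R$ with value $2a(3R+2)$. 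These are precisely the stated $\theta$ and $\tilde\theta$, and substitution into (\ref{thm:uniform-conv-rate}) produces the bound $\|\rho(\cdot,t)\|_\infty \le C e^{-\gamma t}$ with the constants $C$ and $\gamma$ as specified.

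No step is genuinely an obstacle: the theorem is essentially a bookkeeping exercise that packages the author's preceding discussion into a single statement. The only point requiring mild care is the case split in the formula for $\theta$, which turns on whether the critical point $s = 2/3$ of $\sigma'$ lies in the a priori range $[-R,R]$ of solution values produced by the maximum principle.
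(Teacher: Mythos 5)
Your proposal is correct and follows essentially the same route as the paper: existence, uniqueness and Schauder regularity from the uniform parabolicity $\sigma'\ge 1-\tfrac{4}{3}a>0$ via \cite[Theorem 12.14]{Ln}, the monotonicity (ii) from Lemma \ref{lem:maximum}, and (iii) by computing $\theta$ and $\tilde\theta$ on $[-R,R]$ with $R=\|\rho_0\|_\infty\frac{m}{m-1}$ and substituting into (\ref{thm:uniform-conv-rate}). Your case analysis for $\theta$ (endpoint $s=R$ when $R<\tfrac{2}{3}$, vertex otherwise) and for $\tilde\theta$ (maximum of $|\sigma''|$ at $s=-R$) matches the paper's stated constants exactly.
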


The next one deals with the large time behaviors of classical solutions to problem (\ref{Population-ibP}) in a strongly aggregative species  for all smooth and positive initial data whose maximum is less than a certain threshold.

\begin{thm}[Strong aggregation]\label{thm:application-Population-strong}
Let $\sigma$ be the flux function given by (\ref{Population-P-AS}). Assume $\frac{3}{4}<a\le 1$. Let $\rho_0\in C^{2+\alpha}(\bar\Omega)$ be such that $\rho_0\ge 0$ in $\Omega$, $\rho_0(0)=\rho_0(L)=0$, and
\[
\|\rho_0\|_\infty< \frac{2a-\sqrt{a(4a-3)}}{3a}.
\]
Then there exists a unique solution $\rho\in C^{2,1}(\bar\Omega\times[0,\infty))$ to problem (\ref{Population-ibP})  satisfying the following:
\begin{itemize}
\item[(i)] $\rho\in C^{2+\alpha,1+\frac{\alpha}{2}}(\bar\Omega\times[0,T])$ for each $T>0$,
\item[(ii)] $0=\min_{\bar\Omega}\rho(\cdot,t_1)=\min_{\bar\Omega}\rho(\cdot,t_2)\le \max_{\bar\Omega}\rho(\cdot,t_2)\le \max_{\bar\Omega}\rho(\cdot,t_1)$ for all $t_2>t_1\ge0$,
\item[(iii)] $\|\rho(\cdot,t)\|_\infty\le C e^{-\gamma t}$ for all $t\ge 0$, where $0<\tau<1$, $\lambda>0$,
\[
m> \frac{2a-\sqrt{a(4a-3)}} {a(2-3\|\rho_0\|_\infty)-\sqrt{a(4a-3)}},
\]
\[
\theta:=3a\Big(\|\rho_0\|_\infty\frac{m}{m-1}-\frac{2}{3}\Big)^2 +1-\frac{4}{3}a,
\]
\[
\tilde\theta:=2a\Big(3\|\rho_0\|_\infty\frac{m}{m-1}+2\Big),
\]
\[
\gamma:= \frac{\tau\theta\lambda^2e^{-\lambda L}}{\max\Big\{\frac{\|\rho_0\|_\infty \tilde\theta}{(1-\tau)\theta}+1,m \Big\}-e^{-\lambda L}},\;\;\mbox{and}
\]
\[
C:= \|\rho_0\|_\infty \frac{\max\Big\{\frac{\|\rho_0\|_\infty \tilde\theta}{(1-\tau)\theta}+1,m \Big\}-e^{-\lambda L}}{\max\Big\{\frac{\|\rho_0\|_\infty \tilde\theta}{(1-\tau)\theta}+1,m \Big\}-1}.
\]
\end{itemize}
\end{thm}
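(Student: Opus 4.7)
My plan is to reduce the problem to a setting where Theorem \ref{thm:uniform-conv} applies directly, by modifying the flux $\sigma(s) = as^3 - 2as^2 + s$ outside the eventual range of the solution so as to make it globally uniformly parabolic, and then using the maximum principle (Lemma \ref{lem:maximum}) to show that the solution of the modified problem is also a classical solution of the original one. This is precisely the strategy the authors sketch in the paragraph just preceding the theorem; my job is to assemble the pieces and verify the constants.

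First, I would record the elementary fact that $\sigma'(s) = 3a(s-2/3)^2 + 1 - \tfrac{4a}{3}$ has its smaller root at $s_- = \frac{2a - \sqrt{a(4a-3)}}{3a}$, and that the hypotheses $b_0 := \|\rho_0\|_\infty < s_-$ and $m > m^* = \frac{2a - \sqrt{a(4a-3)}}{a(2-3b_0) - \sqrt{a(4a-3)}}$ together give $b_0 \frac{m}{m-1} < s_-$, since $m \mapsto m/(m-1)$ is decreasing on $(1,\infty)$ and $m^*/(m^*-1) = s_-/b_0$. With $\bar{s}$ chosen strictly between $b_0 \frac{m}{m-1}$ and $s_-$ (e.g., the midpoint, as in the discussion), I construct $\tilde\sigma \in C^3(\R)$ coinciding with $\sigma$ on $[-\bar{s}, \bar{s}]$, satisfying $0 < \lambda \le \tilde\sigma' \le \Lambda$ on $\R$ and having $\tilde\sigma'''$ bounded, by a standard one-dimensional cutoff/extension: smoothly splice $\sigma$ with affine continuations of positive slope outside a slightly larger interval via a $C^3$ bump.

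Second, the modified initial-boundary value problem (\ref{Population-2}) is uniformly parabolic with coefficients of the required regularity, so \cite[Theorem 12.14]{Ln} supplies a unique global classical solution $\rho \in C^{2,1}(\bar\Omega \times [0,\infty))$ with $\rho \in C^{2+\alpha, 1+\alpha/2}(\bar\Omega \times [0,T])$ for every $T > 0$; this is (i). Since $\tilde\sigma' > 0$, Lemma \ref{lem:maximum} applies to the modified equation and yields $0 \le \rho \le b_0 < \bar{s}$ on $\bar\Omega \times [0,\infty)$, from which the monotonicity (ii) is immediate. Because the range of $\rho$ lies inside $[-\bar s, \bar s]$ where $\tilde\sigma = \sigma$, the same $\rho$ solves the original problem (\ref{Population-ibP}); uniqueness among classical solutions of (\ref{Population-ibP}) follows since any such solution is likewise confined by Lemma \ref{lem:maximum} to the region where the two fluxes agree, reducing its uniqueness to that for the uniformly parabolic modified problem.

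Finally, for (iii), I apply Theorem \ref{thm:uniform-conv} to $\rho$ viewed as a solution to (\ref{Population-2}). Since $b_0 \frac{m}{m-1} < \bar{s}$, on the symmetric interval $[-b_0 \frac{m}{m-1}, b_0 \frac{m}{m-1}]$ featured in the definitions of $\theta$ and $\tilde\theta$ one has $\tilde\sigma' = \sigma'$ and $\tilde\sigma'' = \sigma''$, so the constants computed from $\tilde\sigma$ coincide with the stated $\theta = 3a(b_0 \frac{m}{m-1} - 2/3)^2 + 1 - \tfrac{4a}{3}$ and $\tilde\theta = 2a(3 b_0 \frac{m}{m-1} + 2)$; note $\theta > 0$ because $b_0 \frac{m}{m-1} < s_-$ places us on the left of the smaller root of $\sigma'$. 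Then (iii) is (\ref{thm:uniform-conv-rate}) verbatim. The only genuinely nontrivial step is the explicit construction of the $C^3$ modification $\tilde\sigma$ meeting all three conditions of (\ref{Population-1}) simultaneously; everything else amounts to bookkeeping around maximum principle confinement and invocation of Theorem \ref{thm:uniform-conv}.
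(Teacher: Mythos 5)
Your proposal follows the paper's proof essentially verbatim: shift to a globally uniformly parabolic $\tilde\sigma$ agreeing with $\sigma$ on $[-\bar s,\bar s]$, invoke Lieberman's Theorem 12.14 for the modified problem (\ref{Population-2}), confine the solution to $[0,\|\rho_0\|_\infty]\subset[-\bar s,\bar s]$ via Lemma \ref{lem:maximum} so that it also solves (\ref{Population-ibP}), and read off (iii) from Theorem \ref{thm:uniform-conv}; your identifications of $\theta$ and $\tilde\theta$ on $[-\|\rho_0\|_\infty\frac{m}{m-1},\|\rho_0\|_\infty\frac{m}{m-1}]$ are also exactly the paper's. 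The one point where you go beyond the paper is the uniqueness claim for the \emph{original} problem (\ref{Population-ibP}): you apply Lemma \ref{lem:maximum} to an arbitrary competitor solution of (\ref{Population-ibP}), but that lemma is proved under the standing hypothesis $\sigma'\ge 0$ on all of $\R$, which the unmodified cubic violates for $a>\frac34$ (the contradiction point in its proof could land at a value of $\rho$ above $\|\rho_0\|_\infty$ where $\sigma'<0$, and the argument breaks). The paper only asserts uniqueness for the modified, uniformly parabolic problem; to get uniqueness among all classical solutions of (\ref{Population-ibP}) you would need an additional argument, so either weaken the claim accordingly or supply one.
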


\begin{remk}
We now interpret the result of Theorems \ref{thm:application-Population-wk} and \ref{thm:application-Population-strong} in the context of zoology. Imposing the absorbing boundary condition, the outside environment is very hostile so that animals crossing the border of the habitat will be removed instantly. This condition would readily imply the extinction of the species after a long time. Such an expected phenomenon is easily confirmed when gregariousness among the animals is low (Theorem \ref{thm:application-Population-wk}): the population density converges uniformly to $0$ at an exponential rate as $t\to\infty$. However, when the aggregation effect between the animals is high, it is not a simple task to establish the existence of global \emph{weak} solutions to problem (\ref{Population-ibP}) for initial data whose magnitude exceeds a certain threshold. Nonetheless, the expected phenomenon is verified if the maximum of the initial datum is less than the threshold (Theorem \ref{thm:application-Population-strong}).
\end{remk}

\subsection{The Perona-Malik model}
As the last application, we consider the Perona-Malik model \cite{PM} in image processing:
\begin{equation}\label{PM-ibP}
\left\{
\begin{array}{ll}
  u_t=\big(\frac{u_x}{1+u_x^2}\big)_{x} & \mbox{in $\Omega\times(0,\infty)$,} \\
  u=u_0 & \mbox{on $\Omega\times\{t=0\}$,} \\
  u_x(0,t)=u_x(1,t)=0 & \mbox{for $t\in(0,\infty)$},
\end{array}\right.
\end{equation}
where $\Omega=(0,1)\subset\R$ is the spatial domain for a one-dimensional computer vision, $u_0(x)$ is the grey level of an initial \emph{noisy} picture at point $x\in\Omega$, and $u(x,t)$ denotes the grey level of the \emph{enhanced} image at point $x\in\Omega$ and time $t>0$.

Problem (\ref{PM-ibP}) is well-posed and admits a global classical solution for all sufficiently smooth initial data $u_0$ with $u_0'(0)=u_0'(1)=0$ and $\|u_0'\|_\infty<1$  \cite{KK}. If $u_0'(0)=u_0'(1)=0$ and $\|u_0'\|_\infty>1$, no global $C^1$ solution to (\ref{PM-ibP}) exists \cite{KK, Go}; in this case,  even a local $C^1$ solution would not exist unless $u_0$ were infinitely differentiable \cite{Ky}.

Here, we focus only on large time behaviors of a global classical solution to problem (\ref{PM-ibP}) when the initial brightness $u_0$ has no sharp change in $\Omega$. For this purpose, let $u_0\in C^{2+\alpha}(\bar\Omega)$ be  such that $u_0'(0)=u_0'(1)=0$ and $0<b_0:=\|u_0'\|_\infty<1$. Since $1/b_0>1,$ there exists a unique number $m^*>1$ with $\frac{m^*}{m^*-1}=\frac{1}{b_0}$; that is, $m^*=\frac{1}{1-b_0}$. Let us fix any $0<\tau<1$, $\lambda>0$ and $m>m^*$; whence $b_0\frac{m}{m-1}<1$. Set $\bar{s}=\frac{1}{2}(b_0\frac{m}{m-1}+1)$.

We now modify the Perona-Malik function $\R\ni s\mapsto \frac{s}{1+s^2}$ in order to obtain an odd function $\sigma\in C^3(\R)$ such that
\begin{equation}\label{PM-ibP-1}
\left\{
\begin{array}{l}
  \sigma(s)=\frac{s}{1+s^2}\;\;\forall s\in[-\bar{s},\bar{s}],\\
  \lambda\le \sigma'(s)\le\Lambda\;\;\forall s\in\R, \\
  \mbox{$\sigma'''$ is bounded in $\R$,}
\end{array}\right.
\end{equation}
where $\Lambda>\lambda>0$ are some constants. It now follows from \cite[Theorem 13.24]{Ln} that there exists a unique solution $u\in C^{2,1}(\bar\Omega\times[0,\infty))$ to the initial-boundary value problem
\begin{equation}\label{PM-ibP-2}
\left\{
\begin{array}{ll}
  u_t=(\sigma(u_x))_{x} & \mbox{in $\Omega\times(0,\infty)$,} \\
  u=u_0 & \mbox{on $\Omega\times\{t=0\}$,} \\
  u_x(0,t)=u_x(1,t)=0 & \mbox{for $t\in(0,\infty)$},
\end{array}\right.
\end{equation}
with the property that $u\in C^{2+\alpha,1+\frac{\alpha}{2}}(\bar\Omega\times[0,T])$ for each $T>0$.
Moreover, we can apply \cite[Lemma 2.2]{Ki1} to infer an improved interior regularity of $u$ as $u\in C^{3+\beta,\frac{3+\beta}{2}}(\Omega\times(0,T])$ for all $T>0$, where $\beta\in(0,1)$ is any fixed number. In particular, we have $\rho:=u_x\in C^{1,0}(\bar\Omega\times[0,\infty))\cap C^{2,1}(\Omega\times (0,\infty))$. With $\rho_0:=u_0'\in C^{1}(\bar\Omega)$ and $L=1$, it is then easy to see that $\rho$  solves problem (\ref{main-ibP}); thus both Theorem \ref{thm:uniform-conv} and Lemma \ref{lem:maximum} are applicable.

It follows from Lemma \ref{lem:maximum} that
\[
\min_{\bar\Omega}u_x(\cdot,t_1)\le \min_{\bar\Omega}u_x(\cdot,t_2) \le 0\le\max_{\bar\Omega}u_x(\cdot,t_2)\le \max_{\bar\Omega}u_x(\cdot,t_1)
\]
for all $t_2>t_1\ge 0.$ In particular, we have $\|u_x\|_{L^\infty(\Omega\times(0,\infty))}=\|u_0'\|_\infty=b_0 <b_0\frac{m}{m-1}<\bar{s}<1$. From this, (\ref{PM-ibP-1}) and (\ref{PM-ibP-2}), we see that $u$ is a classical solution to problem (\ref{PM-ibP}).

Next, Theorem \ref{thm:uniform-conv} yields that with $L=1$,
\[
\begin{split}
\|u_x(\cdot,t)\|_\infty\le & \|u_0'\|_\infty \frac{\max\Big\{\frac{\|u_0'\|_\infty \tilde\theta}{(1-\tau)\theta}+1,m \Big\}-e^{-\lambda}}{\max\Big\{\frac{\|u_0'\|_\infty \tilde\theta}{(1-\tau)\theta}+1,m \Big\}-1} \\
&\times \exp\bigg(-\frac{\tau\theta\lambda^2e^{-\lambda}}{\max\Big\{\frac{\|u_0'\|_\infty \tilde\theta}{(1-\tau)\theta}+1,m \Big\}-e^{-\lambda}} t\bigg),
\end{split}
\]
where
\[
\theta:=\min_{[-\|u_0'\|_\infty\frac{m}{m-1}, \|u_0'\|_\infty\frac{m}{m-1}]}\sigma'\;\;\mbox{and}\;\;
\tilde\theta:=\max_{[-\|u_0'\|_\infty\frac{m}{m-1}, \|u_0'\|_\infty\frac{m}{m-1}]}|\sigma''|.
\]
By our choice of the function $\sigma$ in (\ref{PM-ibP-1}), we have
\[
\theta=\frac{1-(b_0\frac{m}{m-1})^2}{((b_0\frac{m}{m-1})^2+1)^2}
\]
and
\[\tilde\theta=\left\{\begin{array}{ll}
                                     \frac{2b_0\frac{m}{m-1}(3-(b_0\frac{m}{m-1})^2)}{((b_0\frac{m}{m-1})^2+1)^3}  & \mbox{if $0<b_0\frac{m}{m-1}<\sqrt{2}-1$,} \\
                                     \frac{3}{4}+\frac{1}{\sqrt{2}} & \mbox{if $\sqrt{2}-1\le b_0\frac{m}{m-1}<1$.}
                                   \end{array}
 \right.
\]

Let us now summarize the above result as follows.

\begin{thm}\label{thm:application-PM}
Let $u_0\in C^{2+\alpha}(\bar\Omega)$ be  such that $u_0'(0)=u_0'(1)=0$ and $\|u_0'\|_\infty<1.$ Then there exists a unique solution $u\in C^{2,1}(\bar\Omega\times[0,\infty))$ to problem (\ref{PM-ibP}) satisfying the following:
\begin{itemize}
\item[(i)] $u\in C^{2+\alpha,1+\frac{\alpha}{2}}(\bar\Omega\times[0,T])$ for each $T>0$,
\item[(ii)] $\int_\Omega u(x,t)\,dx=\int_\Omega u_0(x)\,dx$ for all $t\ge0$,
\item[(iii)] $\min_{\bar\Omega} u(\cdot,t_1)\le \min_{\bar\Omega} u(\cdot,t_2)\le \max_{\bar\Omega} u(\cdot,t_2) \le \max_{\bar\Omega} u(\cdot,t_1)$ for all $t_2>t_1\ge0,$
\item[(iv)] $\min_{\bar\Omega} u_x(\cdot,t_1)\le \min_{\bar\Omega} u_x(\cdot,t_2)\le0\le \max_{\bar\Omega} u_x(\cdot,t_2) \le \max_{\bar\Omega} u_x(\cdot,t_1)$ for all $t_2>t_1\ge 0,$ and
\item[(v)] $\|u(\cdot,t)-\bar{u}_0\|_{W^{1,\infty}(\Omega)} \le Ce^{-\gamma t}$ for all $t\ge0$, where $\bar{u}_0:=\int_0^1 u_0(x)\,dx$, $0<\tau<1$, $\lambda>0$, $m>\frac{1}{1-\|u_0'\|_\infty}>1$,
    \[
\theta:=\frac{1-(\|u_0'\|_\infty\frac{m}{m-1})^2} {((\|u_0'\|_\infty\frac{m}{m-1})^2+1)^2},
\]
\[\tilde\theta:=\left\{\begin{array}{ll}
                                     \frac{2\|u_0'\|_\infty\frac{m}{m-1}(3-(\|u_0'\|_\infty\frac{m}{m-1})^2)}{((\|u_0'\|_\infty\frac{m}{m-1})^2+1)^3}  & \mbox{if $0<\|u_0'\|_\infty\frac{m}{m-1}<\sqrt{2}-1$,} \\
                                     \frac{3}{4}+\frac{1}{\sqrt{2}} & \mbox{if $\sqrt{2}-1\le \|u_0'\|_\infty\frac{m}{m-1}<1$,}
                                   \end{array}
 \right.
\]
\[
\gamma:=\frac{\tau\theta\lambda^2e^{-\lambda}}{\max\Big\{\frac{\|u_0'\|_\infty \tilde\theta}{(1-\tau)\theta}+1,m \Big\}-e^{-\lambda}},
\]
and $C>0$ is a constant depending only on $\|u_0'\|_\infty$, $\sigma$, $\tau$, $\lambda$ and $m$.
\end{itemize}
\end{thm}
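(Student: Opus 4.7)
The plan is to apply Theorem \ref{thm:uniform-conv} and Lemma \ref{lem:maximum} to the derivative $\rho := u_x$, as laid out in the paragraphs preceding the statement. Starting from $b_0 := \|u_0'\|_\infty < 1$, fix $m > 1/(1-b_0)$ and set $\bar s := \tfrac12(b_0 \tfrac{m}{m-1} + 1) \in (0,1)$. Modify the Perona-Malik flux outside $[-\bar s, \bar s]$ to obtain an odd $\sigma \in C^3(\R)$ satisfying (\ref{PM-ibP-1}). The existence and regularity results cited from \cite{Ln,Ki1} produce a unique global classical solution $u$ of the truncated problem (\ref{PM-ibP-2}) with the regularity in (i), and it remains to establish (ii)--(v) together with the fact that the truncation is inactive, so that $u$ also solves (\ref{PM-ibP}).

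The main step is to differentiate the equation: setting $\rho := u_x$ and $\rho_0 := u_0'$, the Neumann condition on $u$ gives $\rho(0,t) = \rho(1,t) = 0$ and the compatibility condition $\rho_0(0) = \rho_0(1) = 0$, so $\rho$ solves problem (\ref{main-ibP}) with $L = 1$. Lemma \ref{lem:maximum} immediately yields (iv) and in particular the uniform bound $\|u_x(\cdot,t)\|_\infty \le b_0 < \bar s$, which keeps $u_x$ in the range where $\sigma$ agrees with the original Perona-Malik flux; hence $u$ is a classical solution of (\ref{PM-ibP}). Property (ii) follows by integrating the PDE over $\Omega$ and using the Neumann condition to annihilate the boundary term, while (iii) is the standard weak maximum/minimum principle for the uniformly parabolic linear equation $u_t = \sigma'(u_x) u_{xx}$ with Neumann data, using Hopf's lemma to rule out lateral boundary extrema.

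For (v), apply Theorem \ref{thm:uniform-conv} to $\rho$ with $L = 1$. Explicit computation gives $\sigma'(s) = (1-s^2)/(1+s^2)^2$ and $\sigma''(s) = -2s(3-s^2)/(1+s^2)^3$. The first is even and decreasing in $|s|$ on $(0,1)$, so its minimum on $[-c,c]$ with $c := b_0 \tfrac{m}{m-1}$ is attained at $\pm c$, yielding the stated $\theta$. For $\tilde\theta$, an elementary calculation shows that $|\sigma''(s)|$ has a unique critical point on $(0,1)$ at $s = \sqrt{2}-1$, where it takes the value $\tfrac34 + \tfrac{1}{\sqrt{2}}$; thus the maximum on $[-c,c]$ is attained at $\pm c$ when $c < \sqrt{2}-1$ and at $\pm(\sqrt{2}-1)$ otherwise, producing the case split in $\tilde\theta$. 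Theorem \ref{thm:uniform-conv} then delivers the exponential decay of $\|u_x(\cdot,t)\|_\infty$; finally, by (ii) the difference $v(\cdot,t) := u(\cdot,t) - \bar u_0$ has mean zero, so the one-dimensional Poincar\'e inequality $\|v\|_\infty \le \|v_x\|_\infty \cdot |\Omega|$ on $\Omega = (0,1)$ gives $\|u(\cdot,t) - \bar u_0\|_\infty \le \|u_x(\cdot,t)\|_\infty$, and combining the two bounds yields the stated $W^{1,\infty}$ estimate.

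The only step requiring care, beyond routine computation, is the verification that the truncation is inactive: it hinges on the strict inequality $b_0 < \bar s$, arranged by the choice of $\bar s$ and preserved in time by the maximum principle applied to $\rho$. Once this identification is secured, the theorem follows mechanically from Lemma \ref{lem:maximum}, Theorem \ref{thm:uniform-conv}, Poincar\'e's inequality, and the explicit formulas for $\sigma'$ and $\sigma''$.
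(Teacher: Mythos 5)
Your proposal follows the paper's proof essentially step for step: truncate the flux to obtain (\ref{PM-ibP-1}), invoke the cited existence and interior-regularity results so that $\rho=u_x$ qualifies as a solution of (\ref{main-ibP}) with $L=1$, use Lemma \ref{lem:maximum} to confirm the truncation is never active, and combine Theorem \ref{thm:uniform-conv} (with the same explicit computation of $\theta$ and $\tilde\theta$) and Poincar\'e's inequality for (v). The only divergence is item (iii), where the paper cites \cite[Proposition 2.4]{KY2} while you give the standard direct argument via the maximum principle for $u_t=\sigma'(u_x)u_{xx}$ and Hopf's lemma at the Neumann boundary; both are correct.
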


In the exponential rate $\gamma$, one has the freedom of choosing any constants $0<\tau<1$, $\lambda>0$ and $m>\frac{1}{1-\|u_0'\|_\infty}$ to fix it. We do not pursue here finding the least upper bound for such rates $\gamma$ only in terms of $\|u_0'\|_\infty$.

\begin{proof}[Proof of Theorem \ref{thm:application-PM}]
The proof is almost complete above. We just mention the unfinished parts.

Item (ii) is an easy consequence of the Neumann boundary condition.

Item (iii) follows from \cite[Proposition 2.4]{KY2}.

Item (v) is a combination of the above result and Poincar\'e's inequality.
\end{proof}

\begin{remk}
Let us interpret the result of Theorem \ref{thm:application-PM} in a viewpoint of image processing. We only consider a slightly noisy 1-D picture of grey level $u_0$ with $\|u_0'\|_\infty<1$, which may be the less interesting case. The Perona-Malik scheme (\ref{PM-ibP}) then essentially does the job of diffusing the image in such a way that
\begin{itemize}
\item the total brightness of the initial image is preserved at all times,
\item the grey level of the present image can go neither above the maximum level nor below the minimum level of the past image,
\item the rate of change in the grey level of the present image cannot be sharper than that of the past image, and
\item the grey level of the initial image uniformly smoothes out to the constant level of the initial mean brightness as $t\to\infty$ at an exponential rate.
\end{itemize}
\end{remk}

\end{document}